\documentclass[a4paper,10pt]{amsart}

\usepackage[utf8]{inputenc}

\usepackage{amsmath,amsthm,amssymb,eucal,microtype}

\usepackage{tikz}
\usetikzlibrary{arrows}

\numberwithin{equation}{section}
\newtheorem{thm}[equation]{Theorem}
\newtheorem{lemma}[equation]{Lemma}
\newtheorem{cor}[equation]{Corollary}

\theoremstyle{definition}
\newtheorem{ex}[equation]{Example}

\theoremstyle{remark}
\newtheorem{remark}[equation]{Remark}

\DeclareMathOperator{\ann}{\mathsf{ann}}
\DeclareMathOperator{\supp}{\mathsf{supp}}
\DeclareMathOperator{\supph}{\mathsf{supph}}
\DeclareMathOperator{\Spec}{\mathsf{Spec}}
\DeclareMathOperator{\Proj}{\mathsf{Proj}}
\DeclareMathOperator{\Spc}{\mathsf{Spc}}

\DeclareMathOperator{\im}{\mathsf{im}}
\DeclareMathOperator{\modcat}{\mathsf{mod}}
\DeclareMathOperator{\Modcat}{\mathsf{Mod}}
\DeclareMathOperator{\addcat}{\mathsf{add}}
\DeclareMathOperator{\stmod}{\mathsf{stmod}}
\DeclareMathOperator{\coh}{\mathsf{coh}}

\DeclareMathOperator{\ev}{\mathsf{ev}}
\DeclareMathOperator{\coev}{\mathsf{coev}}

\DeclareMathOperator{\End}{\mathsf{End}}
\DeclareMathOperator{\Hom}{\mathsf{Hom}}
\DeclareMathOperator{\inthom}{\underline{\mathsf{hom}}}
\DeclareMathOperator{\Susp}{\Sigma}
\DeclareMathOperator{\Sq}{\mathsf{Sq}}

\newcommand{\D}{\mathsf D}
\newcommand{\SW}{\mathsf{SW}}
\newcommand{\SHC}{\mathsf{SHC}}
\newcommand{\T}{\mathsf T}
\newcommand{\Sub}{\mathsf S}
\newcommand{\R}{\mathsf R}

\newcommand{\Db}{\D^\mathsf b}
\newcommand{\Dperf}{\D^\mathsf{perf}}
\newcommand{\op}{\mathsf{op}}
\newcommand{\pid}{{\mathfrak p}}

\newcommand{\tensor}{\otimes}
\newcommand{\dtensor}{\otimes^\mathbf{L}}

\newcommand{\unit}{\mathbf1}
\newcommand{\Z}{\mathbb Z}
\newcommand{\RP}{\mathbb{R}\mathsf P}

\newcommand{\F}{\mathsf F}
\newcommand{\HF}{\mathsf H\F}

\newcommand{\equi}{\simeq}
\newcommand{\isom}{\cong}

\newcommand{\homeo}{\approx}

\newcommand{\xto}{\xrightarrow}
\renewcommand{\to}{\longrightarrow}
\newcommand{\into}{\lhook\joinrel\longrightarrow}

\author{Johan Steen}
\address{Institutt for matematiske fag \\ NTNU \\ 7491 Trondheim \\ Norway}
\email{johan.steen@math.ntnu.no}

\author{Greg Stevenson}
\address{Universität Bielefeld \\ Fakultät für Mathematik \\ BIREP Gruppe \\ Postfach 10\,01\,31 \\ 33501 Bielefeld \\ Germany}
\email{gstevens@math.uni-bielefeld.de}

\title{Strong generators in tensor triangulated categories}

\begin{document}

\begin{abstract}
    We show that in an essentially small rigid tensor triangulated category with connected Balmer spectrum there are no proper non-zero thick tensor ideals admitting strong generators.  This proves, for instance, that the category of perfect complexes over a commutative ring without non-trivial idempotents has no proper non-zero thick subcategories that are strongly generated.
\end{abstract}

\maketitle

\section{Introduction}
For some time, particularly since the introduction of compactly generated triangulated categories by Neeman, notions of generation have been central to the study of triangulated categories. More recently, there has been a great deal of interest in notions of generators and dimensions for essentially small triangulated categories. Of particular importance, see for instance \cite{bvdb03} and \cite{rouquier08}, are the triangulated categories admitting strong generators i.e., objects from which the whole category can be built by taking finitely many cones. These objects play an important role in addressing representability questions for (co)homological functors, questions concerning notions of smoothness, and increasingly in studying derived categories of varieties.

Thus it is desirable to ascertain when strong generators exist for (thick subcategories of) triangulated categories and to, if possible, exhibit them. The purpose of this work is to give obstructions to the existence of strong generators in many situations of interest. We prove in Theorem~\ref{thm:connspc} that if $\T$ is an essentially small rigid tensor triangulated category whose spectrum, a topological invariant associated to $\T$, is connected, then no proper non-trivial thick tensor ideal of $\T$ admits a strong generator. For instance, if $R$ is a commutative ring this says that no non-zero proper thick subcategory of the perfect complexes is strongly generated. This can be viewed as a complement to the work of Oppermann and Stovicek \cite{os12} who prove that, when $R$ is noetherian and not necessarily commutative, there are no proper strongly generated thick subcategories in $\Db(\modcat R)$ which contain the perfect complexes.

In fact, the case of commutative rings was the motivation for the abstract result we prove here. It is treated explicitly in Section~\ref{sec:rings} and serves as contrast to the techniques used to prove the theorem. The general result arose from an attempt to both understand how generally such a result could be true and to abstract away the reliance on some ring of operators to provide the obstructions by explicitly constructing objects with arbitrary generation time.

\section{Preliminaries}
In this section we will (very) briefly recall some of the concepts that will be required throughout the paper. Further details can be found in the references given. Let us begin by introducing some notation. Given a triangulated category $\T$ we shall denote the suspension functor by $\Susp$. For an object $x\in \T$ we denote by $\langle x \rangle$ the smallest thick i.e., triangulated and closed under summands, subcategory containing the object $x$. 

\subsection{Generators for triangulated categories}

Let $\T$ be an essentially small triangulated category. An object $g$ of $\T$ is called a \emph{(classical) generator} for $\T$ if
\begin{displaymath}
    \langle g \rangle = \T.
\end{displaymath}
In order to define the notion of a strong generator we need a little preparation. Let $x$ be an object of $\T$. Let $\langle x \rangle_1$ denote the full subcategory of $\T$ consisting of all summands of sums of suspensions of $x$. Put differently $\langle x \rangle_1$ is $\addcat(\Susp^i x \; \vert \; i\in \Z)$, the additive closure of $\{\Susp^i x\; \vert \; i\in \Z\}$. We inductively define $\langle x \rangle_i$ for $i>1$ by
\begin{equation} \label{eq:leveli}
    \langle x \rangle_i = \{b' \in \T \; \vert \; \exists \; \text{triangle}\; a \to b'\oplus b'' \to c \to \Susp a \; \text{with} \; a\in \langle x\rangle_{i-1}, \; c\in \langle x \rangle_1\}.
\end{equation}
Note that $\langle x \rangle_i$ for $i>1$ is automatically closed under sums and suspensions as $\langle x \rangle_1$ is. We say an object $y\in \T$ has \emph{level} $n$ with respect to $x$ if $y\in \langle x \rangle_n$ and $y\notin \langle x \rangle_{n-1}$.

We say that $x$ is a \emph{strong generator} for $\T$ if there is an $n\geq 1$ such that
\begin{displaymath}
    \langle x \rangle_n = \T.
\end{displaymath}
Using the notation we have introduced we can rephrase the statement that $x$ generates (not necessarily strongly) as the equality
\begin{displaymath}
    \bigcup_{i\geq 1} \langle x \rangle_i = \T.
\end{displaymath}
Observe that if $\T$ has a strong generator $x$ then any generator $y$ is strong as $x\in \langle y \rangle_i$ for some $i$.

Further details on generators and strong generators can be found, for instance, in \cite{rouquier08}.

\subsection{Rigid tensor triangulated categories}
Let us now recall the definitions concerning the class of triangulated categories which we will consider.

A \emph{tensor triangulated category} $(\T,\otimes, \unit)$ is a triangulated category $\T$ which is equipped with a symmetric monoidal structure $(\otimes, \unit)$ such that $\otimes$ is an exact functor in each variable. We say a thick subcategory $\Sub$ of $\T$ is a \emph{tensor ideal} if for any $x\in \T$ and $s\in \Sub$ we have $x\tensor s \in \Sub$ i.e., if $\Sub$ is closed under tensoring with arbitrary objects of $\T$.

$\T$ is \emph{closed} if $\otimes$ has a right adjoint which we call the \emph{internal hom} and denote by $\inthom(-,-)$. Suppose $\T$ is closed. For an object $x \in \T$ we set
\begin{displaymath}
    x^\vee = \inthom(x, \unit).
\end{displaymath}
The tensor triangulated category $(\T,\otimes,\unit)$ is \emph{rigid} if for all $x,y\in \T$ the natural morphism
\begin{displaymath}
    x^\vee \otimes y \to \inthom(x,y)
\end{displaymath}
is an isomorphism. In other words, $\T$ is rigid if $x^\vee\otimes-$ is right adjoint to $x\otimes-$.

\subsection{The Balmer spectrum}

Let $(\T,\otimes,\unit)$ be an essentially small tensor triangulated category. Following \cite{balmer05} we associate to $\T$ its spectrum $\Spc \T$. Recall that
\begin{displaymath}
    \Spc \T = \{\mathcal{P}\subsetneq \T \; \vert \; \mathcal{P}\; \text{is prime}\}.
\end{displaymath}
Here $\mathcal{P}$ is \emph{prime} if $\mathcal{P}$ is a proper thick tensor ideal of $\T$ such that whenever $x\otimes y \in \mathcal{P}$, for $x,y \in \T$, we have $x\in \mathcal{P}$ or $y\in \mathcal{P}$. The \emph{Zariski topology} on $\Spc \T$ is given by the basis of closed subsets
\begin{displaymath}
    \big\{ \supp k = \{\mathcal{P}\in \Spc \T \mid k\notin \mathcal{P}\} \mid k\in \T \big\}.
\end{displaymath}
We say a subset $\mathcal{V}$ of $\Spc \T$ is \emph{Thomason} if $\mathcal{V}$ can be written as a union of closed subsets of $\Spc \T$ each of which has quasi-compact complement.

Further details concerning $\Spc \T$ and its role in the classification of thick tensor ideals of $\T$ can be found in \cite{balmer05,balmer10}.

\section{The case of commutative rings}\label{sec:rings}
Let $R$ be a commutative ring. We denote by $\D(\Modcat R)$ the unbounded derived category of $R$.  The \emph{homological support} of a complex $X$ of $R$-modules is defined to be
$$\supph X = \{ \pid\in\Spec R \mid X \dtensor_R k(\pid) \neq 0 \},$$
where $k(\pid)$ is the residue field of the local ring $R_\pid$ and the tensor product is required to be non-zero in the derived category of $R$.  If, in addition, $R$ is noetherian, then a complex $X$ in $\Db(\modcat R)$ satisfies $X \dtensor_R k(\pid) \isom 0$ if and only if $H^*(X)_\pid = 0$ i.e., if and only if $X_\pid \isom 0$.  In this situation $\supph X$ is a closed subset of $\Spec R$, since we have
\begin{displaymath}
    \supph X = \bigcup_{i\in\Z} V\big(\ann H^i(X)\big)
\end{displaymath}
by the previous remark.

Denote by $\Dperf(R) \subseteq \D(\Modcat R)$ the thick subcategory of perfect complexes, and, for a subset $Z\subseteq \Spec R$, by
$$\Dperf_Z(R) = \{ X \in \Dperf(R) \mid \supph X \subseteq Z \}$$
the thick subcategory of perfect complexes supported on $Z$.  A theorem of the aforementioned Thomason \cite{thomason97} (generalizing results of Hopkins and Neeman in the noetherian setting, see \cite{hopkins87,neeman92}) tells us that the lattice of thick subcategories of $\Dperf(R)$ is isomorphic to the lattice of Thomason subsets of $\Spec R$.  The isomorphism is given by sending a thick subcategory $\Sub$ to $\cup_{X\in\Sub} \supph(X)$, and by sending a Thomason subset $Z$ to $\Dperf_Z(R)$.

The aim of this section is to sketch an explicit proof of the following theorem.
\begin{thm} \label{thm:infgen}
    Let $R$ be a noetherian ring with no non-trivial idempotents.  Then no thick subcategory
    $$0 \subsetneq \Sub \subsetneq \Dperf(R)$$
    admits a strong generator.
\end{thm}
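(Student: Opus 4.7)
The plan is to combine Thomason's classification of thick subcategories of $\Dperf(R)$ with Rouquier's ghost lemma, arguing by contradiction in the hope of producing a self-map of arbitrarily high nilpotency order inside $\Sub$. So suppose $\Sub = \langle G \rangle_n$ for some strong generator $G$ and some $n \geq 1$. Since $R$ is noetherian, every thick subcategory of $\Dperf(R)$ is a tensor ideal, and Thomason's theorem identifies $\Sub = \Dperf_Z(R)$ where $Z := \supph G = \bigcup_i V(\ann H^i G)$ is a closed subset of $\Spec R$, necessarily non-empty and proper by the hypothesis on $\Sub$. Because $R$ has no non-trivial idempotents $\Spec R$ is connected, so $Z$ cannot also be open, and I can pick primes $\qid \notin Z$ and $\pid \in Z$ with $\qid \subsetneq \pid$. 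Writing $I_Z$ for the radical ideal defining $Z$, the condition $\qid \notin V(I_Z)$ lets me fix an element $f \in I_Z \setminus \qid$.

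The next step is to extract $G$-ghosts from powers of $f$. The graded module $\End^*_R(G) \isom H^*(G^\vee \dtensor_R G)$ is finitely generated over the noetherian ring $R$ and supported on $Z = V(I_Z)$, hence annihilated by some power $I_Z^M$ and in particular by $f^M$. Thus $f^M \cdot 1_G = 0$ in $\End_R(G)$, and by $R$-linearity multiplication by $f^M$ on any $X \in \Dperf(R)$ induces the zero map on $\Hom^*_R(G, X)$; that is, $f^M \colon X \to X$ is a $G$-ghost. Rouquier's ghost lemma, applied inside $\Sub = \langle G \rangle_n$, then forces the $n$-fold composite of $G$-ghosts to vanish, and specialising to self-ghosts yields that $f^{Mn} \colon X \to X$ is zero in $\Dperf(R)$ for every $X \in \Sub$.

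The contradiction will come from exhibiting an object of $\Sub$ on which $f^{Mn}$ is non-zero. I would take a finite generating set $f = f_1, f_2, \ldots, f_k$ of $I_Z$ and let $X_m$ be the Koszul complex $K(f_1^m, \ldots, f_k^m)$; its support equals $V(f_1, \ldots, f_k) = Z$, so $X_m \in \Sub$. Multiplication by $f^{Mn}$ on $X_m$ descends to multiplication by $f^{Mn}$ on $H^0(X_m) = R/(f_1^m, \ldots, f_k^m)$, so it suffices to find some $m$ with $f^{Mn} \notin (f_1^m, \ldots, f_k^m)$. Passing to the noetherian local domain obtained by localising $R/\qid$ at $\pid/\qid$, the image of $f$ is a non-zero element of the maximal ideal; Krull's intersection theorem in that local ring then gives $\bigcap_m (f_1^m, \ldots, f_k^m) = 0$, so $f^{Mn}$ avoids $(f_1^m, \ldots, f_k^m)$ for all sufficiently large $m$. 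This non-containment lifts back to $R$ and produces the required $X_m$, contradicting the ghost lemma.

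The main obstacle is this last step: showing that a large power of $f$ survives modulo the Koszul ideal $(f_1^m, \ldots, f_k^m)$. The role of connectedness of $\Spec R$ is precisely to let me choose $f$ simultaneously in the ideal of $Z$ and outside some non-trivial prime $\qid$, so that $f$ remains non-nilpotent in a suitable residue domain and the Krull-intersection argument can detect the non-vanishing; without connectedness $f$ could be nilpotent in every relevant localisation and the ghost chain would collapse.
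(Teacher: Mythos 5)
Your argument is correct, but it follows a genuinely different route from the paper. The paper also begins with the Hopkins--Neeman--Thomason classification to write $\Sub=\Dperf_{V(I)}(R)$ with $I=\ann H^*(X)$ for a generator $X$, but then proceeds purely by annihilator bookkeeping: a lemma on exact sequences ($\ann A\cdot\ann C\subseteq\ann B$) shows that any object of level $m$ over $X$ has cohomology killed by $I^m$, while Koszul-type complexes for $R/I^n$ require $I^n$ on the nose; connectedness enters through a separate nilpotence lemma (if the powers of $I$ stabilize then $V(I)$ is clopen, hence all of $\Spec R$), which rules out the degenerate case. You instead run a ghost argument in the spirit of Oppermann--Stovicek (and of the paper's own treatment of the Spanier--Whitehead category, where Steenrod squares play the role your $f^M$ plays here): connectedness is used to produce a generization $\qid\subsetneq\pid$ with $\pid\in Z$, $\qid\notin Z$, the finite generation of $\End^*(G)\isom H^*(G^\vee\dtensor_R G)$ over the noetherian $R$ makes $f^M$ a $G$-ghost, strong generation in $n$ steps forces $f^{Mn}$ to annihilate every object of $\Sub$, and Koszul complexes $K(f_1^m,\dots,f_k^m)$ together with Krull's intersection theorem in the domain $(R/\qid)_{\pid/\qid}$ defeat this. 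Both proofs use Koszul complexes as test objects; they differ in how connectedness is converted into an obstruction (clopenness of $V(I)$ versus existence of a generization escaping $Z$) and in the mechanism bounding what $n$ cones can achieve (annihilator growth versus composition of ghosts). Your approach yields a clean uniform bound on the nilpotency order of the action of $f$ on $\Sub$ and meshes well with the general tensor-triangular picture; the paper's is more elementary, using only long exact sequences and Nakayama, and exhibits explicit objects of arbitrarily high level with respect to the fixed generator.

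Two small points worth tightening. The step ``$Z$ is not open, so I can pick $\qid\notin Z$ and $\pid\in Z$ with $\qid\subsetneq\pid$'' deserves its one-line justification: if no such pair existed, $Z$ would be closed and stable under generization, hence (in a noetherian spectrum) a union of irreducible components whose complement is the union of the remaining components, so $Z$ would be clopen, contradicting connectedness. Also, the fact that every thick subcategory of $\Dperf(R)$ is a tensor ideal has nothing to do with $R$ being noetherian; it is because $\Dperf(R)$ is generated by the tensor unit $R$.
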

As a consequence of Theorem~\ref{thm:connspc} one does not really need the noetherian hypothesis above. In fact one can generalize the proof we give here, by using a passage to the limit argument as in Thomason's classification argument, to remove the noetherian hypothesis. However, this slightly complicates matters so we stick to the noetherian case for simplicity. We should also note that this result, although it does not seem to explicitly appear in the literature, is presumably well known. 

From now on we will assume that $R$ is noetherian. The first step to proving the theorem is to note that if a thick subcategory $\Sub$ admits a generator, then it has to correspond to a closed subset under the classification of thick subcategories (and in fact this is sufficient as well as necessary).
\begin{lemma}
    A thick subcategory $\Sub = \Dperf_Z(R)$ admits a generator if and only if $Z$ is closed.
\end{lemma}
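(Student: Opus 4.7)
The plan is to pass through Thomason's classification and observe that the Thomason subset associated to $\langle g \rangle$ is precisely $\supph g$, then exploit that perfect complexes have closed homological support.

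For the preparatory step I would record how $\supph$ interacts with the operations that build up a thick subcategory from a single object. Since $-\dtensor_R k(\pid)$ is an exact functor, one gets $\supph(\Sigma X) = \supph X$, $\supph(A \oplus B) = \supph A \cup \supph B$, and, for any triangle $A \to B \to C \to \Sigma A$, the inclusion $\supph B \subseteq \supph A \cup \supph C$. An easy induction on the level then shows that every $X \in \langle g \rangle$ satisfies $\supph X \subseteq \supph g$.

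For the forward implication, assume $\Sub = \Dperf_Z(R)$ admits a generator $g$. Under Thomason's classification $\Sub$ corresponds to $Z = \bigcup_{X \in \Sub} \supph X$, and by the previous paragraph this union collapses to $\supph g$. Since $g$ is a perfect complex only finitely many cohomologies $H^i(g)$ are non-zero, so the identity $\supph g = \bigcup_{i \in \Z} V(\ann H^i(g))$ from the discussion preceding the lemma exhibits $Z = \supph g$ as a finite union of closed sets, hence closed.

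For the reverse implication, suppose $Z$ is closed. Noetherianity of $R$ lets us write $Z = V(I)$ with $I = (f_1, \ldots, f_n)$. Take $K = K(f_1, \ldots, f_n)$, the Koszul complex on these generators; it is perfect, and the computation $K \dtensor_R k(\pid) = K(\bar f_1, \ldots, \bar f_n) \otimes_{k(\pid)} $ is non-zero iff every $\bar f_i = 0$ in $k(\pid)$ iff $\pid \in V(I)$, so $\supph K = Z$. Hence $K \in \Dperf_Z(R)$, so $\langle K \rangle \subseteq \Dperf_Z(R)$, and since both sides correspond to the same Thomason subset $Z$ (by the forward direction applied to $\langle K \rangle$, together with Thomason's bijection) they coincide.

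The only step that is not essentially formal is the computation of $\supph K$ for the Koszul complex, but this reduces to the standard fact that a Koszul complex on a unit is contractible and on a nilpotent/zero sequence over a field is a tensor product of two-term complexes with zero differential; neither step is an obstacle.
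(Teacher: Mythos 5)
Your proposal is correct and follows essentially the same route as the paper: for the forward direction, the monotonicity of $\supph$ under the operations building $\langle g\rangle$ (which the paper invokes as $\supph Y \subseteq \supph X$ for $Y\in\langle X\rangle$) combined with $\supph\Dperf_Z(R)=Z$ shows $Z=\supph g$ is closed, and for the converse the Koszul complex on generators of $I$ together with Thomason's bijection shows $\langle K\rangle = \Dperf_{V(I)}(R)$. The extra details you supply (the support computation for $K\dtensor_R k(\pid)$ and the finite-union argument for closedness) are accurate and simply make explicit what the paper leaves implicit.
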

\begin{proof}
    Suppose $\Dperf_Z(R)$ has a generator i.e., $\Dperf_Z(R) = \langle X \rangle$.  Given $Y\in \langle X \rangle$ we necessarily have $\supph Y \subseteq \supph X$, so
    \begin{displaymath}
        \Dperf_Z(R) \subseteq \Dperf_{\supph X}(R).
    \end{displaymath}
    On the other hand we know that, by definition, $\supph X \subseteq Z$ and $\supph \Dperf_Z(R) = Z$. Hence $Z = \supph X$ and is closed as claimed.

    For the other direction, suppose $Z = V(I)$ is a closed subset and $\Sub = \Dperf_Z(R)$. The Koszul complex $K(I)$ is a perfect complex with support $Z$ and so lies in $\Sub$. That it is a generator is a consequence of the classification of thick subcategories: $\langle K(I) \rangle$ is necessarily the thick subcategory of objects supported on $\supph K(I) = Z$.
\end{proof}

This means that a thick subcategory $\Sub$ admitting a generator $X$ is necessarily of the form $\Dperf_{V(I)}(R)$ for some ideal $I\subseteq R$, and this ideal can be chosen to be $I = \ann H^*(X)$.

The idea behind the proof of Theorem \ref{thm:infgen} is to exhibit objects that lie on arbitrarily high levels with respect to the generator $X$.  We begin with two purely algebraic statements that we will need for the proof of the theorem.

\begin{lemma} \label{lemma:ann}
    Let $A \stackrel f\to B \stackrel g\to C$ be an exact sequence of $R$-modules.  Then
    $$\ann A\cdot\ann C \subseteq \ann B.$$
\end{lemma}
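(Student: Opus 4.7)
The plan is entirely elementary: unwind what $\ann A\cdot \ann C\subseteq \ann B$ means on elements and use exactness at $B$ to bridge the annihilators.

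First I would reduce to showing $\alpha\gamma \in \ann B$ for arbitrary $\alpha\in\ann A$ and $\gamma\in\ann C$, since the product ideal $\ann A\cdot \ann C$ is generated additively by such elements and $\ann B$ is an ideal. Fix any $b\in B$; the goal is $\alpha\gamma\cdot b=0$.

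Now I would chase $b$ through the sequence. Since $\gamma$ annihilates $C$, we have $g(\gamma b) = \gamma g(b) = 0$, so $\gamma b\in \ker g$. By exactness at $B$, $\ker g = \im f$, so $\gamma b = f(a)$ for some $a\in A$. Applying $\alpha$ and using $R$-linearity of $f$ together with $\alpha\in\ann A$, we obtain
\begin{displaymath}
    \alpha\gamma\cdot b = \alpha f(a) = f(\alpha a) = f(0) = 0,
\end{displaymath}
which gives $\alpha\gamma\in\ann B$ as required.

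There is no real obstacle here; the only thing to be careful about is that $\ann A\cdot\ann C$ denotes the product ideal (finite sums of products), not just the set of products, but this is handled for free because $\ann B$ is an ideal, so it suffices to verify the containment on pure products.
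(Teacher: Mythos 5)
Your proof is correct and is essentially the same argument as the paper's: the paper phrases it as $\im(ac) \subseteq a\cdot\ker g = a\cdot\im f = 0$, which is exactly your element chase through exactness at $B$, and your remark about reducing from the product ideal to pure products is fine.
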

\begin{proof}
    Let $a$ and $c$ be elements of $\ann A$ and $\ann C$ respectively.  Then multiplication by $a$ and $c$ on $B$ gives $\im ac \subseteq a\cdot\ker g = a\cdot\im f = 0$. 
\end{proof}

\begin{lemma} \label{lemma:nilpotence}
    Let $R$ be a commutative noetherian ring with connected spectrum and $I\subsetneq R$ a proper ideal.  If the descending chain of powers
    $$I \supseteq I^2 \supseteq I^3 \supseteq \cdots$$
    stabilizes, then $I$ is nilpotent.
\end{lemma}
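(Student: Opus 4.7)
The plan is to show that the stabilized power $J := I^n$ is an idempotent ideal of $R$, then use the connectedness of $\Spec R$ to force $J = 0$.

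First, I would observe that if $I^n = I^{n+1}$, then multiplying by $I$ repeatedly gives $I^n = I^{n+k}$ for all $k \geq 0$, so in particular
\begin{displaymath}
    J^2 = I^{2n} = I^n = J.
\end{displaymath}
Thus $J$ is a finitely generated (since $R$ is noetherian) idempotent ideal satisfying $J = I \cdot J$ (and also $J = J \cdot J$).

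Next I would apply the standard determinant-trick form of Nakayama's lemma to $J$ viewed as a module over itself. Since $J = J \cdot J$ and $J$ is finitely generated, there exists $e \in J$ with $(1 - e)J = 0$. In particular $(1-e)e = 0$, i.e., $e^2 = e$, so $e$ is an idempotent element of $R$. Furthermore, $eR \subseteq J$ because $e \in J$, and conversely any $x \in J$ satisfies $x = ex \in eR$; hence $J = eR$.

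At this point the connectedness hypothesis does the work: $R$ having connected spectrum is equivalent to $R$ possessing no non-trivial idempotents, so $e \in \{0,1\}$. The case $e = 1$ would give $J = R$, whence $I^n = R$ and therefore $I = R$, contradicting properness of $I$. So $e = 0$ and $J = I^n = 0$, making $I$ nilpotent. The only subtle step is the passage from $J^2 = J$ to $J = eR$ via Nakayama's determinant trick, but this is entirely standard in the noetherian setting; everything else is bookkeeping.
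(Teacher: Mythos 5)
Your proof is correct, but it takes a different route from the paper. You reduce to the purely algebraic statement that a finitely generated idempotent ideal is generated by an idempotent element: from $J=J^2$ with $J=I^n$ finitely generated, the determinant-trick form of Nakayama produces $e\in J$ with $(1-e)J=0$, whence $e^2=e$ and $J=eR$, and then connectedness of $\Spec R$ (equivalently, absence of non-trivial idempotents in $R$) forces $e=0$, since $e=1$ would contradict properness of $I$. The paper instead argues topologically: with $N$ chosen so the powers stabilize, it shows $V(I)=\Spec R\setminus\supp(I^N)$ --- using Nakayama locally at each $\pid\in V(I)$ to get $I^N R_\pid=0$, and primality for the reverse inclusion --- so that $V(I)$ is clopen and hence all of $\Spec R$, which gives nilpotence. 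Both arguments hinge on Nakayama plus connectedness; yours packages connectedness through the idempotent-element characterization and yields the sharper conclusion $I^n=0$ for the stabilized power directly, while the paper's clopen-support argument works pointwise on the spectrum and is closer in spirit to the tensor-triangular generalization (Theorem~\ref{thm:connspc}), where connectedness of the Balmer spectrum is likewise exploited via a decomposition rather than via ring elements. The one step you should make explicit if writing this up is the equivalence you invoke between connectedness of $\Spec R$ and the absence of non-trivial idempotents, though this is of course standard.
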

\begin{proof}
    We will prove the following statement: $V(I)$ is both open and closed in $\Spec R$.  Then, since $\Spec R$ is connected and $I$ is proper, so $V(I)$ is non-empty, it follows that $V(I) = \Spec R$. This is equivalent to the statement that $I$ is nilpotent.

    Choose an $N$ such that $I^N = I^m$ for all $m\geq N$. Given $\pid \in V(I)$ we have an equality $I^NR_\pid = II^NR_\pid$ so by the Nakayama lemma $I^NR_\pid = 0$ i.e., $\mathfrak{p} \notin \supp(I^N)$. Hence $V(I) \subseteq \Spec R \setminus \supp(I^N)$.

    On the other hand suppose $\pid \notin \supp(I^N)$ i.e., $\ann I^N \nsubseteq \pid$. So there is an $r\in R\setminus \pid$ with $rI^N = 0$. It follows that $I^N \subseteq \pid$ and hence $\pid \in V(I)$. Thus $V(I) = \Spec R \setminus \supp(I^N)$ is open and it is, of course, closed. As indicated at the start of the proof this shows $I$ is nilpotent as claimed.
\end{proof}

We are now ready to prove the theorem.
\begin{proof}[Proof of Theorem \ref{thm:infgen}]
    Let $\Sub$ be a thick subcategory admitting a generator $X$ as above i.e., $\Sub = \Dperf_{V(I)}(R)$ where $I$ is the annihilator of $H^*(X)$.  If the descending chain of powers of $I$ stabilizes, then $I$ is nilpotent by the above lemma, so $V(I) = \Spec R$, and the associated thick subcategory is $\Dperf(R)$.  So we may assume that this is not the case.  Note that it in order to show that $\Sub$ does not admit a strong generator, it is sufficient to show that $X$ is not strong.  To do so, we consider the total cohomology of $X$ and the action of $R$ on it.
    
    Let $J$ be an arbitrary ideal of $R$.  The action of $R$ on an $R$-module $M$ factors through $R\xto{\mathrm{can}} R/J$ if and only if $\ann M \supseteq J$, so in particular the action of $R$ on $H^*(X)$ factors through $R\xto{\mathrm{can}} R/I$.  Now fix $n>1$, and construct a Koszul complex for $R/I^n$, namely a bounded complex $K$ of finitely generated free modules such that $H^0(K) \isom R/I^n$ and such that each $H^i(K)$ is annihilated by $I^n$.  In particular $K$ lies in $\langle X \rangle$ and the action of $R$ on $H^*(K)$ factors through $R\xto{\mathrm{can}} R/I^n$, but through no quotient by a lower power of $I$.

    Now we invoke the first lemma:  Let $A \to B \to C \to \Sigma A$ be a distinguished triangle from $\Dperf(R)$ such that $\ann H^*(A) \supseteq I^i$ and $\ann H^*(C) \supseteq I^j$, and the annihilators contain no lower powers of $I$.  Taking homology yields a long exact sequence, and using Lemma \ref{lemma:ann} we obtain that $\ann H^*(B) \supseteq I^{i+j}$.

    This implies that, starting from the generator $X$, in order to build $K$ above, we need to take at least $n-1$ cones.  Since $n$ was chosen arbitrarily, this shows that $X$ is not a strong generator.
\end{proof}

\begin{remark}
    If $R$ is of infinite global dimension, then $\Dperf(R)$ admits no strong generator either.
\end{remark}

\begin{remark}
    The hypothesis that $\Spec R$ is connected is necessary. One can obtain counterexamples by taking $R\times R'$ for $R$ and $R'$ regular of finite Krull dimension.
\end{remark}

\section{Tensor triangulated categories}

In the previous section we showed that non-existence of strong generators for thick subcategories of perfect complexes over a ring can be deduced from connectedness of $\Spec R$.  We now generalize this to the setting of tensor triangulated categories.  Comparing with the previous section, the Balmer spectrum plays the role of $\Spec R$.
\begin{thm} \label{thm:connspc}
    Let $(\T,\tensor,\unit)$ be an essentially small rigid tensor triangulated category.  If $\Spc\T$ is connected as a topological space, then no non-zero and proper thick tensor ideals of $\T$ are strongly generated.
\end{thm}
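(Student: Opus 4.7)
Assume for contradiction that $\Sub$ is a non-zero proper thick tensor ideal of $\T$ admitting a strong generator $g$, so $\Sub = \langle g \rangle_n$ for some $n \geq 1$. Since $\T$ is rigid, $V := \supp(g)$ is a closed subset of $\Spc \T$; it is non-empty because $\Sub \neq 0$, and proper because $\unit \notin \Sub$ forces $\supp(\unit) = \Spc \T \nsubseteq V$. By connectedness of $\Spc \T$, the proper non-empty closed set $V$ cannot also be open, so there is a point $P \in V$ lying in the closure of the complementary open set $U := \Spc \T \setminus V$.

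The strategy is to emulate the argument for Theorem~\ref{thm:infgen} using a purely tensor-triangular substitute for the ``annihilator of total cohomology''. Given any morphism $f \colon \unit \to Z$ in $\T$, tensoring the iterated powers $f^{\otimes k} \colon \unit \to Z^{\otimes k}$ with an object $y$ produces maps $y \otimes f^{\otimes k} \colon y \to y \otimes Z^{\otimes k}$, and I would define
\[
    \nu_f(y) := \min\{k \geq 1 \mid y \otimes f^{\otimes k} = 0\},
\]
with the convention $\nu_f(y) := \infty$ when no such $k$ exists. A short diagram chase using naturality of $f^{\otimes k}$ shows that if $A \to B \to C \to \Susp A$ is a distinguished triangle then $\nu_f(B) \leq \nu_f(A) + \nu_f(C)$; this is the precise analogue of Lemma~\ref{lemma:ann}. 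An induction on the level, using that $\langle g \rangle_1$ is closed under summands, sums, and shifts, upgrades this to the bound
\[
    y \in \langle g \rangle_k \ \Longrightarrow \ \nu_f(y) \leq k \cdot \nu_f(g).
\]

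Connectedness should then enter by producing an $f$ together with a sequence $\{y_m\}_{m\geq 1}$ in $\Sub$ satisfying $\nu_f(y_m) \to \infty$. Once such data is in hand, choosing $m > n \cdot \nu_f(g)$ yields $y_m \in \Sub = \langle g \rangle_n$ with $\nu_f(y_m) > n \cdot \nu_f(g)$, the desired contradiction. Natural candidates for $f$ are morphisms out of $\unit$ built from $g$, such as the coevaluation $\unit \to g^\vee \otimes g$ available by rigidity, or homogeneous elements of $\End^*_\T(\unit)$ accessed via Balmer's comparison map $\Spc \T \to \Spec \End^*_\T(\unit)$; the $y_m$ should then be constructed as iterated cones of tensor powers of $f$, possibly tensored with $g$.

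The main obstacle is this final construction. Absent a ring of operators whose non-nilpotence is guaranteed a priori, the required morphism and the family of objects with arbitrary $\nu_f$ must be extracted from the connectedness of $\Spc \T$ alone. This calls for an abstract incarnation of Lemma~\ref{lemma:nilpotence}: one must show that the decreasing chain of Thomason subsets carved out by the supports of the $f^{\otimes k}$ cannot stabilise without rendering $V$ both open and closed, which is forbidden by connectedness. Bridging the gap from the boundary point $P \in V \cap \overline{U}$ to explicit $f$ and $y_m$ is where I expect the technical heart of the theorem to lie.
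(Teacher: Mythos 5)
Your proposal is an honest attempt to transplant the annihilator-and-nilpotence argument of Section~\ref{sec:rings} into the abstract setting, and you are right that the sub-additivity $\nu_f(B)\leq\nu_f(A)+\nu_f(C)$ over triangles holds (it is a standard smash-nilpotence diagram chase). But the proof is not complete, and you say so yourself: the ``technical heart'' --- producing a morphism $f\colon\unit\to Z$ together with objects $y_m\in\Sub$ with $\nu_f(y_m)\to\infty$, and showing that this family is forced to exist by connectedness --- is exactly what is missing. That gap is not a detail. In $\Dperf(R)$ the ring $R$ provides a canonical source of such data (the ideal $I=\ann H^*(X)$ and the Koszul complexes for $R/I^m$), and Lemma~\ref{lemma:nilpotence} turns connectedness of $\Spec R$ into non-stabilisation of $I\supseteq I^2\supseteq\cdots$. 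In a general rigid tensor triangulated category there is no such ring of operators to hand, and your candidate $f=\coev_{\unit,g}\colon\unit\to g^\vee\otimes g$ does not work: as the paper itself observes, $g\otimes\coev_{\unit,g}$ is a split monomorphism (a triangle identity of the tensor--hom adjunction), so $\nu_f(g)=\infty$ and the bound $\nu_f(y)\leq k\cdot\nu_f(g)$ is vacuous. The introduction even records that the authors abandoned precisely this line of attack: the general theorem ``arose from an attempt \ldots\ to abstract away the reliance on some ring of operators to provide the obstructions by explicitly constructing objects with arbitrary generation time.''

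The paper's actual route is entirely different and sidesteps the construction problem. One shows that strong generation of the ideal $\Sub$ implies that the inclusion $i_*\colon\Sub\into\T$ has a right adjoint: rigidity and the generator $g$ are used to show that each functor $H_t=\T(i_*(-),t)$ is locally finitely presented (the key split triangle being $\unit\to\inthom(g,g)\to z\to\Susp\unit$ tensored with $g$), and then Rouquier's representability theorem (Theorem~\ref{thm:rouq}), applicable because $\Sub$ is strongly generated and one may pass to the idempotent completion, represents $H_t$ by some $i^!t$. The resulting semi-orthogonal decomposition $\Sub\to\T\to\R$ forces $\Spc\T\homeo\Spc\Sub\sqcup\Spc\R$ by Theorem~\ref{thm:biks}, contradicting connectedness. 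So where you try to exhibit explicit obstructions, the paper instead lets connectedness do the work at the level of the spectrum, converting strong generation into a representability statement and thence into a forbidden decomposition. If you want to pursue your approach, you would need a genuinely new idea for manufacturing $f$ and the $y_m$ from the topology of $\Spc\T$ alone; the published proof suggests this is harder than it looks.
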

\begin{remark}
    $\T$ is called \emph{monogenic} if the smallest thick subcategory containing the tensor-unit is $\T$ itself i.e., $\T = \langle\unit\rangle$.  In such categories the notion of a thick tensor ideal coincides with that of a thick subcategory.
\end{remark}

\begin{ex}
	In the case that $\T$ is \emph{not} monogenic the statement cannot in general be improved to cover all thick subcategories. Indeed, let $k$ be a field and consider $\Db(\coh \mathbb{P}^1_k)$, the bounded derived category of coherent sheaves on the projective line over $k$. The structure sheaf $\mathcal{O}$ is exceptional i.e., $\End^*(\mathcal{O}) \cong k$, and thus $\langle \mathcal{O} \rangle$ is equivalent to $\Db(\modcat k)$. Clearly the latter category has a strong generator, namely $k$, and so $\langle \mathcal{O} \rangle$ is a strongly generated thick subcategory of $\Db(\coh \mathbb{P}^1_k)$. We note that it is not a tensor ideal: $\mathcal{O}$ is the tensor unit so the smallest tensor ideal containing it is $\Db(\coh \mathbb{P}^1_k)$. 
\end{ex}

The following result is proved by the second author in the appendix of \cite{biks13}, and constitutes one of the two main ingredients of our proof.
\begin{thm}\label{thm:biks}
    Let $\T$ be a tensor triangulated category, and let $\Sub$ and $\R$ be thick tensor ideals of $\T$.  If $\Sub \stackrel{i_*}\to \T \stackrel{j^*}\to \R$ is a semi-orthogonal decomposition of $\T$ i.e., the functors $i_*$ and $j^*$ both admit right adjoints, then the spectrum of $\T$ decomposes as
    $$\Spc\T \homeo \Spc\Sub \sqcup \Spc\R.$$
    If $\T$ is an essentially small rigid tensor triangulated category, it is sufficient to assume that $\Sub$ is a thick tensor ideal.  Moreover, in this case there is a decomposition $\T \equi \Sub\oplus\R$.
\end{thm}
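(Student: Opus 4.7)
The plan is to extract from the semi-orthogonal decomposition a canonical pair of tensor-idempotents and then read off both conclusions from them. Set $e = i_*i^!\unit$ and $f = j_*j^*\unit$, so that the semi-orthogonal decomposition applied to $\unit$ yields a triangle $e \to \unit \to f \to \Susp e$ with $e\in\Sub$ and $f\in\R$. The universal relation $j^*i_*=0$ gives the semi-orthogonality $\Hom_\T(\Sub,\R)=0$, whence any object in $\Sub\cap\R$ has zero identity, i.e., $\Sub\cap\R=0$. Since both $\Sub$ and $\R$ are tensor ideals, $e\otimes f\in\Sub\cap\R=0$. Tensoring the unit triangle with $e$, $f$, and arbitrary $x\in\T$ produces $e\otimes e\cong e$, $f\otimes f\cong f$, and a functorial triangle $e\otimes x\to x\to f\otimes x$ with $e\otimes x\in\Sub$, $f\otimes x\in\R$; in particular $s\cong e\otimes s$ for $s\in\Sub$ and $r\cong f\otimes r$ for $r\in\R$.

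To obtain the spectrum decomposition, fix a prime $\mathcal{P}\in\Spc\T$. Since $e\otimes f=0\in\mathcal{P}$, primality gives $e\in\mathcal{P}$ or $f\in\mathcal{P}$; in the first case $s\cong e\otimes s\in\mathcal{P}$ for all $s\in\Sub$, so $\Sub\subseteq\mathcal{P}$, and in the second case symmetrically $\R\subseteq\mathcal{P}$. Both cannot occur simultaneously: the triangle $e\to\unit\to f$ would then force $\unit\in\mathcal{P}$ by thickness. Hence $\Spc\T$ decomposes into the two clopen pieces $\{\mathcal{P}:\Sub\subseteq\mathcal{P}\}$ and $\{\mathcal{P}:\R\subseteq\mathcal{P}\}$, which are the complements of $\supp e$ and $\supp f$. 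Balmer's functoriality applied to $j^*$, together with the equivalence $\T/\Sub\equi\R$ coming from the decomposition, identifies the first piece with $\Spc\R$; for the second piece I would verify that $\mathcal{P}\mapsto\mathcal{P}\cap\Sub$ is a bijection between primes containing $\R$ and primes of $\Sub$, with inverse $\mathcal{Q}\mapsto\mathcal{Q}+\R$, using the idempotent relations above to handle tensor-ideal closure and primality, and that this bijection is a homeomorphism. Together these give $\Spc\T\homeo\Spc\Sub\sqcup\Spc\R$.

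For the rigid case I would first argue that $\R$ is automatically a tensor ideal. Identifying $\R$ with the right-orthogonal $\Sub^\perp=\{x\mid\Hom(\Sub,x)=0\}$, given $r\in\R$ and $t\in\T$ rigidity supplies the natural isomorphism $\Hom(s,t\otimes r)\cong\Hom(s\otimes t^\vee,r)$; this vanishes for every $s\in\Sub$ because $s\otimes t^\vee\in\Sub$ and $r\in\Sub^\perp$, so $t\otimes r\in\Sub^\perp=\R$.

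Finally, to upgrade to the splitting $\T\equi\Sub\oplus\R$ in the rigid case, I would establish that thick tensor ideals of a rigid ttc are closed under duality: for $s\in\Sub$ the rigid triangle identities exhibit $s^\vee$ as a retract of $s^\vee\otimes s\otimes s^\vee\in\Sub$. Hence $f^\vee\in\R$, and rigidity gives $\Hom(f,\Susp e)\cong\Hom(\unit,f^\vee\otimes\Susp e)$. The right-hand side vanishes because $f^\vee\otimes\Susp e\in\Sub\cap\R=0$, so the boundary map of the unit triangle is zero and $\unit\cong e\oplus f$. Tensoring with any $x$ then yields a natural splitting $x\cong(e\otimes x)\oplus(f\otimes x)$, exhibiting $\T\equi\Sub\oplus\R$. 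The main subtle point is this duality closure step: it supplies the reverse semi-orthogonality $\Hom(\R,\Sub)=0$ that is unavailable in the general case, and it is precisely this extra orthogonality that upgrades spectrum decomposition to categorical product decomposition.
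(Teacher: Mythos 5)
The paper does not prove this theorem; it quotes it from the appendix of \cite{biks13}, so there is no in-text argument to compare against. On its own terms your proof is essentially correct and is the standard tensor-idempotent argument for such splittings. Extracting $e=i_*i^!\unit$ and $f=j_*j^*\unit$ from the localization triangle for $\unit$, proving $e\otimes e\cong e$, $f\otimes f\cong f$ and $e\otimes f=0$, and using primality to force $\Sub\subseteq\mathcal{P}$ or $\R\subseteq\mathcal{P}$ but not both does yield the clopen partition $\Spc\T=\supp e\sqcup\supp f$. Your rigid-case steps are also right: $\R$ is automatically a tensor ideal via the adjunction isomorphism $\Hom(s,t\otimes r)\cong\Hom(s\otimes t^\vee,r)$; thick tensor ideals in a rigid category are closed under duals because $s^\vee$ is a retract of $s^\vee\otimes s\otimes s^\vee$; and these together kill the connecting map, since $\Hom(f,\Susp e)\cong\Hom(\unit,f^\vee\otimes\Susp e)=0$, splitting $\unit\cong e\oplus f$ and hence $\T\equi\Sub\oplus\R$. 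The one step you leave as a sketch is identifying $\{\mathcal{P}:\R\subseteq\mathcal{P}\}$ with $\Spc\Sub$. Your proposed correspondence $\mathcal{P}\mapsto\mathcal{P}\cap\Sub$ can be made to work, but a cleaner and more symmetric route is to observe that since $\R$ is a thick tensor ideal, $\T/\R$ is a tensor triangulated category, the composite $\Sub\into\T\to\T/\R$ is an equivalence of tensor triangulated categories (it carries the unit $e$ of $\Sub$ to the unit of $\T/\R$ because the cone of $e\to\unit$ is $f\in\R$), and Balmer's identification of $\Spc$ of a Verdier quotient with the subspace of primes containing the kernel then applies to this piece exactly as you already used it for $\T/\Sub\equi\R$. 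With that detail filled in, your argument is complete.
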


We will prove the theorem by showing that the inclusion of a strongly generated tensor ideal has a right adjoint; this statement gives, in conjunction with Theorem~\ref{thm:biks}, a contradiction to the connectedness of $\Spc \T$. The second main ingredient is the following result due to Rouquier, generalizing a result of Bondal--Van den Bergh \cite{bvdb03}.  It allows us to produce the desired right adjoint.  Before stating the result we need to recall the notion of locally finitely presented cohomological functor.

A cohomological functor $H\colon \Sub^\op \to \Modcat\Z$ is \emph{locally finitely generated} if for each $x\in\Sub$, there is an object $a$ and a natural transformation $\Sub(-,a)\to H$ that is epimorphic when evaluated on $\Susp^i x$ for all $i$.  Moreover, $H$ is \emph{locally finitely presented} if it is locally finitely generated and, for all $b \in \Sub$, the kernel of any natural transformation $\Sub(-,b)\to H$ is locally finitely generated.

\begin{thm}[{\cite[Corollary 4.17]{rouquier08}}]\label{thm:rouq}
    Suppose that $\Sub$ is a strongly generated triangulated category with split idempotents and that $H\colon \Sub^\op \to \Modcat\Z$ is a cohomological functor.  Then $H$ is representable if and only if $H$ is locally finitely presented.
\end{thm}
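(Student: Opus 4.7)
I would split the theorem into its two directions. The "only if" direction is formal: if $H \isom \Sub(-,h)$, taking $a = h$ with the identity transformation $\Sub(-,h) \to H$ shows $H$ is locally finitely generated uniformly in $x$. For the kernel condition, a natural transformation $\Sub(-,b) \to \Sub(-,h)$ is induced by a morphism $f \colon b \to h$; completing $f$ to a distinguished triangle $k \to b \to h \to \Sigma k$ shows via the long exact sequence that the kernel functor is precisely the image of $\Sub(-,k) \to \Sub(-,b)$, so $a = k$ witnesses that this kernel is locally finitely generated. The substance is the "if" direction, for which I fix a strong generator $g$ with $\Sub = \langle g\rangle_n$ and reduce as follows: it suffices to construct $h \in \Sub$ together with $\phi \colon \Sub(-,h) \to H$ that is an isomorphism on each $\Sigma^j g$. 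Indeed, the full subcategory of $\Sub$ on which $\phi$ is an isomorphism is closed under suspensions and summands, and the five-lemma applied to the long exact sequences obtained from the triangles $a \to b' \oplus b'' \to c \to \Sigma a$ of \eqref{eq:leveli} shows it is closed under the formation of $\langle g\rangle_i$ from $\langle g\rangle_{i-1}$ and $\langle g\rangle_1$; since it contains $g$, it must then contain $\langle g\rangle_n = \Sub$.

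To construct such an $h$ I would run a Brown-representability-style approximation. By local finite generation at $g$, choose $a_0$ and $\phi_0 \colon \Sub(-,a_0) \to H$ epi on each $\Sigma^j g$. Given $\phi_i \colon \Sub(-,a_i) \to H$ with kernel $K_i$, local finite presentation supplies a morphism $b_{i+1} \to a_i$ whose induced map $\Sub(-,b_{i+1}) \to \Sub(-,a_i)$ factors through $K_i$ and surjects onto $K_i$ at each $\Sigma^j g$. Complete to a triangle $b_{i+1} \to a_i \to a_{i+1} \to \Sigma b_{i+1}$. Because the composite $\Sub(-,b_{i+1}) \to \Sub(-,a_i) \to H$ is the zero natural transformation (the factorisation through $K_i$ holds at all objects, not merely at $g$), Yoneda together with the long exact sequence obtained by applying $H$ to the triangle produces an extension $\phi_{i+1} \colon \Sub(-,a_{i+1}) \to H$ whose precomposition with $a_i \to a_{i+1}$ recovers $\phi_i$.

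The central difficulty, and where strong generation must genuinely be used, is termination of this tower: one wants a finite $N$ for which $\phi_N$ is an isomorphism on every $\Sigma^j g$. The homotopy-colimit trick from Neeman's compactly generated Brown representability is unavailable in the essentially small setting, so the bound $\Sub = \langle g\rangle_n$ has to directly control the length of the process. I would expect to choose $b_{i+1}$ with controlled level in $\langle g\rangle_*$ so that a suitable complexity invariant attached to $K_i$ evaluated on $g$ strictly decreases with $i$, forcing termination after at most $n$ steps. Identifying this invariant and verifying that strong generation makes it decrease is, I anticipate, the main technical content of the argument, and is the sole point at which classical generation would not suffice.
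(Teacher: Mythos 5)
Your ``only if'' direction is fine, and so is your reduction: a natural transformation of cohomological functors that is invertible on all $\Susp^j g$ is invertible on the thick subcategory generated by $g$, by the five lemma. Your tower is also the right start --- choose $\phi_0$ epi on the suspensions of $g$, use local finite presentation to kill the kernel on the suspensions of $g$ by passing to a cone, and extend over the cone using that $H$ is cohomological --- and this is exactly the approximation scheme of Rouquier (following Bondal--Van den Bergh). Bear in mind that the paper does not prove this statement at all; it imports it from Rouquier's Corollary 4.17, so the comparison is with that argument.

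The genuine gap is exactly where you say you expect the main technical content to lie, and the route you propose for closing it is not the one that works. Nothing forces any finite stage $\phi_N$ to become an \emph{isomorphism} on the $\Susp^j g$, and no well-founded ``complexity invariant'' of $K_i(\Susp^j g)$ is in sight: these kernels are arbitrary abelian groups and need not shrink. What the construction actually provides is weaker: every element of $\ker\phi_i(\Susp^j g)$ is annihilated under $a_i\to a_{i+1}$, and the role of strong generation $\Sub=\langle g\rangle_n$ is a propagation lemma showing that, with a delay controlled by the level, surjectivity of $\phi_N$ and this ``kernel dies further up the tower'' property hold on \emph{every} object, not only on suspensions of $g$. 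One then concludes, not that some $\phi_N$ is invertible, but that $H$ is a direct summand of a representable functor: for a suitable tower map $f\colon a_N\to a_M$, the induced $\Sub(-,f)$ kills $\ker\phi_N$, hence factors as $\psi\circ\phi_N$ with $\psi\colon H\to\Sub(-,a_M)$, and $\phi_M\circ\psi=1_H$ since $\phi_M\circ\Sub(-,f)=\phi_N$ and $\phi_N$ is epi. Only at this point does the split-idempotents hypothesis enter: the idempotent $\psi\circ\phi_M$ of $\Sub(-,a_M)$ comes via Yoneda from an idempotent of $a_M$, whose splitting represents $H$. Your sketch never uses split idempotents, and that is a structural warning that the plan cannot be completed as stated: without that hypothesis the theorem is false (a non-split idempotent $e$ on $X$ yields a locally finitely presented cohomological direct summand of $\Sub(-,X)$ that is not representable), so any argument aiming at an isomorphism on the generators at a finite stage --- which would bypass idempotent completeness entirely --- is aiming at the wrong target.
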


We now embark upon the proof of Theorem~\ref{thm:connspc}. The first step is to set up a representability argument by showing that certain hom-functors are locally finitely presented.

\begin{lemma}
    Let $\T$ be an essentially small rigid tensor triangulated category, and $\Sub\stackrel{i_*}\into \T$ a thick tensor ideal that admits a (not necessarily strong) generator.  Then for all $t\in\T$, the functor
    $$H_t := \T\big(i_*(-),t\big)\colon \Sub^\op \to \Modcat\Z$$
    is locally finitely presented.
\end{lemma}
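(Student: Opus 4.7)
The plan is to verify directly the two parts of the definition of locally finitely presented: first that $H_t$ itself is locally finitely generated, and second that the kernel of any natural transformation $\Sub(-,b)\to H_t$ is locally finitely generated. In both cases the key tool is rigidity: for any $x\in\Sub$ the dual $x^\vee$ exists in $\T$, and the triangle identities for evaluation and coevaluation will supply the sections we need. The tensor ideal hypothesis is crucial, since it forces $x\otimes x^\vee\otimes(-)$ to land in $\Sub$.

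For local finite generation, fix $x\in\Sub$ and set $a:=x\otimes x^\vee\otimes t$; this lies in $\Sub$ because $x\in\Sub$ and $\Sub$ is a tensor ideal. The evaluation $x\otimes x^\vee\to\unit$ (precomposed with the symmetry if needed) yields a morphism $\alpha\colon a\to t$ which, via Yoneda, defines a natural transformation $\Sub(-,a)\to H_t$. I claim the induced map $\T(\Susp^i x,a)\to\T(\Susp^i x,t)$ is surjective for every $i\in\Z$. Indeed, given $\phi\colon\Susp^i x\to t$, set
\[
\tilde\phi \;=\; (x\otimes x^\vee\otimes\phi)\circ(\coev\otimes\Susp^i x)\colon \Susp^i x \to x\otimes x^\vee\otimes t.
\]
Naturality of the symmetric monoidal structure and the triangle identity for $x$ give $\alpha\circ\tilde\phi=\phi$; conceptually, $\Susp^i x$ is the image of $\Susp^i\unit$ under the functor $x\otimes-$, so the counit of the rigid adjunction $x\otimes- \dashv x^\vee\otimes-$ at $t$ has a section when evaluated on $\Susp^i x$.

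For local finite presentation, let $\eta\colon\Sub(-,b)\to H_t$ be an arbitrary natural transformation. By Yoneda it corresponds to a morphism $\phi\colon b\to t$ in $\T$, and on $y\in\Sub$ its kernel is $K(y)=\ker\bigl(\phi_*\colon\T(y,b)\to\T(y,t)\bigr)$. Completing $\phi$ to a triangle $F\to b\xto{\phi}t\to\Susp F$ in $\T$ and applying $\T(y,-)$, the long exact sequence identifies $K(y)$ with the image of $\T(y,F)\to\T(y,b)$. I now repeat the previous construction with $F$ in place of $t$: for each $x\in\Sub$ the object $a':=x\otimes x^\vee\otimes F$ lies in $\Sub$ (again by the tensor ideal property applied to $x$), the composite $a'\xto{\ev\otimes F}F\to b$ factors through $K$ because its further composition with $\phi$ is zero (the composite $F\to b\to t$ vanishes), and exactly the same argument as above yields surjectivity on $\Susp^i x$. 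Hence $K$ is locally finitely generated.

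The main technical nuisance is bookkeeping the two variants of $\ev$ and $\coev$ that appear in a symmetric monoidal category and invoking the triangle identity for the correct pair; once that is set up, both verifications are direct. Note that the hypothesis that $\Sub$ admits a generator is not actually used in this lemma and is instead needed only in the next step, where we will want to apply Rouquier's representability theorem \ref{thm:rouq} (which demands a strong generator) to the locally finitely presented functor $H_t$.
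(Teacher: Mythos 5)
Your proof is correct in substance and takes a genuinely different route from the paper's. The paper first invokes Rouquier's \cite[Lemma~4.6]{rouquier08} to reduce to checking local finite presentation only at the chosen generator $g$ of $\Sub$, and then builds the coevaluation triangle $\unit\to\inthom(g,g)\to z\to\Susp\unit$, tensors with $g$, and uses the split triangle together with rigidity to produce a split epimorphism $\Sub(-,\inthom(g,g)^\vee\otimes t)\to H_t$ at $\Susp^i g$. You instead verify the defining condition directly at every $x\in\Sub$, using the object $x\otimes x^\vee\otimes t$ (which, since $\inthom(g,g)^\vee\otimes t\cong g^\vee\otimes g\otimes t$, is precisely the paper's object when $x=g$). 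The payoff is exactly what you observe at the end: you never use the generator hypothesis, so this step of the proof holds for an arbitrary thick tensor ideal, and the generator is only needed downstream to apply Rouquier's representability theorem. This makes the logical structure of the lemma cleaner. The treatment of local finite presentation (complete $b\to t$ to a triangle, show the kernel is covered by the already-established l.f.g.\ functor attached to the fibre) is the same in both arguments.

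One caution on the explicit formula. As written, with $\alpha$ the evaluation $a = x\otimes x^\vee\otimes t\to t$ and $\tilde\phi = (x\otimes x^\vee\otimes\phi)\circ(\coev\otimes\Susp^i x)$, the interchange law gives
\[
\alpha\circ\tilde\phi \;=\; \phi\circ\bigl((\ev\circ\sigma\circ\coev)\otimes\Susp^i x\bigr),
\]
and $\ev\circ\sigma\circ\coev\colon\unit\to\unit$ is the Euler characteristic (categorical trace) of $x$, not the identity. Inserting $\coev$ ``on the left'' of $\Susp^i x$ and then evaluating ``on the left'' is not a zig-zag identity. The fix is the adjunction picture you already invoke: write $\Susp^i x\cong x\otimes\Susp^i\unit$, take the adjoint $\hat\phi\colon\Susp^i\unit\to x^\vee\otimes t$ of $\phi$ under $x\otimes-\dashv x^\vee\otimes-$, and set $\tilde\phi = x\otimes\hat\phi$; then $\alpha\circ\tilde\phi = \phi$ is just the adjunction identity. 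Equivalently, the coevaluation must be inserted ``in the middle'', between the $x$ and the $\Susp^i\unit$, so that the composite is a genuine triangle identity rather than the trace. You flag this as a bookkeeping nuisance, and indeed it is, but worth stating precisely since the naive formula computes the wrong thing.
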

\begin{proof}
    The adjunction between $\tensor$ and the internal hom $\inthom$ in $\T$ gives canonical morphisms
    $$\ev_{x,y}\colon \inthom(x,y)\tensor x \to y \qquad \textrm{and} \qquad \coev_{x,y}\colon x \to \inthom(y, x\tensor y),$$
    namely the counit and unit of adjunction.

    By assumption $\Sub$ admits a generator, say $g$. Rouquier's \cite[Lemma 4.6]{rouquier08} tells us that to check $H_t$ is locally finitely presented it is sufficient to verify it is locally finitely presented at $g$.

    To this end, consider the following completion of $\coev$ into a triangle in $\T$,
    $$\unit \xto{\coev_{\unit,g}} \inthom(g,g) \to z \to \Susp \unit.$$
    Tensoring this triangle with $g$ yields
    \begin{equation} \label{eq:splittri}
        g \isom \unit\tensor g \xto{\coev_{\unit,g}\tensor g} \inthom(g,g) \tensor g \to z \tensor g \to \Susp \unit \tensor g \isom \Susp g,
    \end{equation}
    and we claim that this is a split triangle.  Indeed the composition
    $$g \isom \unit\tensor g \xto{\coev_{\unit,g}\tensor g} \inthom(g,g) \tensor g \xto{\ev_{g,g}} g$$
    is the identity morphism, as this is a triangle identity of the tensor-hom adjunction.  Thus (\ref{eq:splittri}) is split.

    Furthermore, the adjunction yields the following commutative diagram of natural transformations
    $$\begin{tikzpicture}[scale=.75]
        \node (1) at (0,2) {$\T\big(-\tensor\,\inthom(g,g),t\big)$};
        \node (2) at (6,2) {$\T(-,t)$};
        \node (3) at (0,0) {$\T\big(-,\inthom(g,g)^\vee\tensor t\big)$};
        \node (4) at (6,0) {$\T(-,t)$.};
    \path[->,font=\scriptsize]
        (1) edge node[auto] {$(1\tensor \coev_{\unit,g})^*$} (2)
        (1) edge node[auto,swap] {$\gamma$} node[auto] {$\wr$} (3)
        (3) edge node[auto] {$(\coev_{\unit,g}^\vee \tensor \,1)_*$} (4)
        (2) edge[thick,double distance=2pt,-] node {} (4);
    \end{tikzpicture}$$
    Evaluating the lower natural transformation at $\Susp^i g$ gives a morphism
    $$\Sub\big(\Susp^i g,\inthom(g,g)^\vee\tensor t\big) \xto{(\coev_{\unit,g}^\vee \tensor \,1)_*} H_t(\Susp^i g),$$
    where the source can be written as a hom in $\Sub$ since $\Sub$ is a tensor ideal and so contains $\inthom(g,g)^\vee\tensor t$. This map is a (split) epimorphism by the commutativity of the above square and the earlier observation that $\coev_{\unit,g} \tensor g$ is a split monomorphism. Thus $H_t$ is locally finitely generated.
    
    Now choose any natural transformation $\beta\colon\Sub(-,b) \to H_t$, with $b\in\Sub$.  This is represented by a morphism $b\to t$ in $\T$, which we complete to a triangle
    $$a \to b \to t \to \Susp a.$$
    By what we have already proved $H_a$ is locally finitely generated. By construction i.e., via the triangle definining $a$, there is an epimorphism $H_a \to \ker\beta$, showing that $\ker\beta$ is locally finitely generated as well.  Thus $H_t$ is locally finitely presented.
\end{proof}

With this lemma at hand the proof of the main theorem is straightforward.

\begin{proof}[Proof of Theorem \ref{thm:connspc}]
    Suppose $\Sub$ admits a strong generator.  As pointed out above, it is sufficient to show that the inclusion $i_*\colon \Sub \to \T$ admits a right adjoint: in this case Theorem~\ref{thm:biks} implies $\Spc \T$ is not connected, contradicting our hypotheses.

    We now claim that $\T$ can be assumed to be idempotent complete without loss of generality.  To see this, let $\T^\sharp$ denote the idempotent completion of $\T$.  It is an essentially small rigid tensor triangulated category provided $\T$ is, and the fully faithful inclusion $\T \into \T^\sharp$ is an exact monoidal functor inducing a homeomorphism $\Spc\T^\sharp \homeo \Spc\T$ by \cite[Corollary~3.14]{balmer05}.  Moreover, $\Sub^\sharp$ is a strongly generated thick tensor ideal of $\T^\sharp$, and hence it is sufficient to produce a right adjoint to the inclusion of $\Sub^\sharp$ in $\T^\sharp$.

    As $\Sub$ admits a generator the previous lemma tells us that $H_t := \T\big(i_*(-),t\big)$ is locally finitely presented for all $t\in \T$. Since we have assumed that the generator of $\Sub$ is moreover strong we can apply Theorem~\ref{thm:rouq} to deduce that $H_t$ is in fact representable i.e.,
	\begin{displaymath}
        H_t = \T\big(i_*(-),t\big) \cong \Sub(-,i^!t)
	\end{displaymath}
	for some $i^!t$ in $\Sub$. As everything in sight is natural the assignment $t\mapsto i^!t$ gives a functor $i^!$ which is manifestly right adjoint to $i_*$. As explained at the beginning of the proof this yields a contradiction, implying $\Sub$ cannot be strongly generated.
\end{proof}

\begin{cor} \label{cor:quotient}
    Let $\Sub$ be a thick subcategory of $\T$ such that the Verdier quotient $\T/\Sub$ is a monogenic essentially small tensor triangulated category with connected spectrum.  Then any strongly generated thick subcategory $\R$ such that $\Sub \subseteq \R \subseteq \T$ is either $\Sub$ or $\T$.
\end{cor}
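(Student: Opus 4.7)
The strategy is to reduce the statement to Theorem~\ref{thm:connspc} by passing to the Verdier quotient $q\colon \T \to \T/\Sub$. For the quotient to carry a tensor triangulated structure in the natural way, as implicit in the hypothesis, the thick subcategory $\Sub$ must itself be a tensor ideal; granting this, $q$ is an exact symmetric monoidal functor, and one checks that the rigidity of $\T$ (which we are implicitly working under, as in the rest of the section) descends to $\T/\Sub$. The Verdier correspondence then gives a bijection between thick subcategories of $\T$ containing $\Sub$ and thick subcategories of $\T/\Sub$, via $\R \mapsto \R/\Sub$.

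Now suppose $\R$ with $\Sub \subseteq \R \subseteq \T$ is strongly generated, say $\R = \langle g \rangle_n$ for some $g\in \R$ and $n\geq 1$. The restricted quotient $\R \to \R/\Sub$ is triangulated and essentially surjective, and the level filtration $\langle - \rangle_i$ is preserved by exact functors, so $q(g)$ strongly generates $\R/\Sub$ at level at most $n$. Moreover, because $\T/\Sub$ is monogenic, the remark following Theorem~\ref{thm:connspc} tells us that every thick subcategory of $\T/\Sub$ is automatically a thick tensor ideal; in particular $\R/\Sub$ is one.

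With these preliminaries in hand, I would apply Theorem~\ref{thm:connspc} directly to the essentially small rigid tensor triangulated category $\T/\Sub$, whose spectrum is connected by hypothesis. The theorem forces $\R/\Sub$ to be either zero or all of $\T/\Sub$, and pulling back along the Verdier correspondence yields $\R = \Sub$ or $\R = \T$, as claimed. The only point that might require verification is that the Verdier quotient of a rigid tensor triangulated category by a thick tensor ideal is again rigid, but this is a standard consequence of the tensor--hom adjunction descending to the quotient; once this is established the rest of the argument is formal.
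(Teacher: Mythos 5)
Your argument is correct and is essentially the paper's own proof: pass to the quotient $\T/\Sub$, use monogenicity to see the image of $\R$ is a thick tensor ideal, observe that strong generation descends along the exact quotient functor, and apply Theorem~\ref{thm:connspc} together with the Verdier correspondence to conclude $\R=\Sub$ or $\R=\T$. The extra points you spell out (that $\Sub$ is implicitly a tensor ideal and that rigidity descends to the quotient) are exactly the hypotheses the paper leaves tacit, so there is no divergence in substance.
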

\begin{proof}
    Since $\T/\Sub$ is monogenic, the image of $\R$ in $\T/\Sub$ is a thick tensor ideal, and thus by the theorem is strongly generated only if it is $0$ or $\T/\Sub$.  Since strong generation of the image is implied by strong generation of $\R$ itself, it follows that if $\R$ is strongly generated then it coincides with either $\Sub$ or $\T$.
\end{proof}

\section{Applications}

We now give some situations in which one can apply Theorem~\ref{thm:connspc} to see that strongly generated thick subcategories are not so easy to come by.

\subsection{Perfect complexes over commutative rings}
We now revisit the case where $\Dperf(R)$ is the category of perfect complexes over a commutative ring $R$.  In contrast to Section \ref{sec:rings} we no longer require $R$ to be noetherian.  $\Dperf(R)$ is a tensor triangulated category with the derived tensor product $\tensor^{\mathbf L}_R$ and tensor unit $R$.  The monoidal structure is closed via the usual tensor-hom adjunction with $\mathbf R\Hom_R(-,-)$, and makes $\Dperf(R)$ an essentially small rigid tensor triangulated category.  Indeed Balmer shows in \cite[Example~4.4]{balmer10} that $\Spc\big(\Dperf(R)\big) \homeo \Spec R$, so since $\Dperf(R)$ is monogenic Theorem \ref{thm:connspc} generalizes Theorem \ref{thm:infgen}.

\subsection{Stable categories of finite groups}
Let $k$ be a field of characteristic $p$ and $G$ be a finite group whose order is divisble by $p$.  Denote by $\stmod kG$ the stable module category.  The tensor product over $k$ with the diagonal group action makes this into an essentially small rigid tensor triangulated category with tensor unit $k$.  By \cite[Corollary~5.10]{balmer05} there is a homeomorphism
$$\Spc(\stmod kG) \homeo \Proj H^\bullet(G;k).$$
That this is a connected space follows from the indecomposability of $k$ in $\stmod kG$, for instance by the main theorem of \cite{balmer07}.

It follows that $\stmod kG$ has no proper and non-zero thick tensor ideals admitting a strong generator.

Note that whenever $G$ is a $p$-group $\stmod kG$ is monogenic.  Since 
$$\stmod kG \equi \Db(\modcat kG)/\Dperf(kG)$$
we can apply Corollary \ref{cor:quotient} to recover a special case of the aforementioned result of Oppermann and Stovicek: There are no proper strongly generated thick subcategories in $\Db(\modcat kG)$ which contain $\Dperf(kG)$.

\subsection{The finite stable homotopy category}
The stable homotopy category of spectra $(\SHC,\wedge,S^0)$ is a tensor triangulated category with monoidal product the smash product and the sphere spectrum as the unit.  The thick subcategory of compact objects $\SHC^{\mathsf c}$ consists of the finite spectra i.e., suspension spectra of finite CW complexes.  We shall adopt the convention of calling this subcategory the \emph{Spanier--Whitehead category}, namely
$$\SW = \langle S^0 \rangle = \SHC^{\mathsf c} \subseteq \SHC.$$

The category $\SW$ inherits the structure of $\SHC$ making it tensor triangulated.  It is essentially small and moreover rigid; the dual objects $x^\vee$ are the Spanier--Whitehead duals.  Finally, by using the Hopkins--Smith \cite{hs98} classification of thick subcategories, Balmer shows in \cite[Corollary~9.5]{balmer10} that $\Spc(\SW)$ is a connected topological space, and so Theorem \ref{thm:connspc} applies.  It turns out that we can do slightly better: $\SW$ does not admit a strong generator either, as we now proceed to show.

Denote by $\F_2$ the field with two elements, and by $\HF_2$ the Eilenberg--MacLane spectrum which is the representing object of mod $2$ singular cohomology on $\SHC$.  The cup product of singular cohomology is not a suitable invariant when working stably, but instead one can consider the module structure over the mod $2$ Steenrod algebra.  We recall here that this is the $\F_2$-algebra generated by the Steenrod squares $\Sq^i$, which are degree $i$ stable cohomology operations.  While we will not mention the defining properties of the Steenrod squares and rather refer the reader to \cite{margolis83} for a thorough treatment, we recall the fact that if $\alpha \in \widetilde H^1(\RP^{2^k};\F_2) \isom \F_2$ is the non-zero degree one cohomology class of the real projective space, then also
$$\Sq^{2^{k-1}}\cdots\Sq^2\Sq^1(\alpha) \in \widetilde H^{2^k}(\RP^{2^k};\F_2) \isom \F_2$$
is non-zero for any $k>0$.

As the Steenrod squares correspond to morphisms $\Sq^i\colon \HF_2 \to \Susp^i \HF_2$ in $\SHC$, we can express the cohomology class as the following non-zero composition in $\SHC$
$$\RP^{2^k} \stackrel\alpha\to \Susp^1 \HF_2 \xto{\Susp^1\Sq^1} \Susp^2 \HF_2 \to \cdots \to \Susp^{2^{k-1}} \HF_2 \xto{\Susp^{2^{k-1}} \Sq^{2^{k-1}}} \Susp^{2^k} \HF_2.$$
Note that each morphism in this composition vanishes when precomposed with morphisms out of any suspension of the sphere spectrum.  The ``ghost lemma'' for cocomplete triangulated categories, see \cite{christensen98}, in particular Section~7.1, then implies that
\begin{equation} \label{eq:RP}
    \RP^{2^k} \not\in \langle S^0 \rangle^\oplus_k \subseteq \SHC.
\end{equation}
Here $\langle-\rangle^\oplus_k$ is defined as in (\ref{eq:leveli}), but where one allows arbitrary direct sums.

We immediately obtain
\begin{thm}
    No non-zero thick subcategories of the Spanier--Whitehead category $\SW$ admit strong generators.
\end{thm}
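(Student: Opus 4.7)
My plan is to combine Theorem~\ref{thm:connspc} with the input (\ref{eq:RP}) to handle all non-zero thick subcategories of $\SW$ at once. Since $\SW=\langle S^0\rangle$ is monogenic, every thick subcategory is a thick tensor ideal; as $\Spc\SW$ is connected, Theorem~\ref{thm:connspc} immediately rules out strong generation for every \emph{proper} non-zero thick subcategory. The only case remaining is $\SW$ itself, which is not covered by Theorem~\ref{thm:connspc} and is where (\ref{eq:RP}) will do the work.

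Suppose for contradiction that some $g\in\SW$ is a strong generator, with $\langle g\rangle_n=\SW$ for some $n\geq 1$. Since $\SW=\bigcup_i\langle S^0\rangle_i$, there is some $k\geq 1$ with $g\in\langle S^0\rangle_k$. The level filtration is multiplicative: an easy induction on (\ref{eq:leveli}), using that $\langle S^0\rangle_k$ is closed under the additive/summand/suspension operations defining level $1$, yields $\langle g\rangle_i\subseteq\langle S^0\rangle_{ki}$ for all $i\geq 1$. Setting $N:=kn$ we obtain $\SW=\langle g\rangle_n\subseteq\langle S^0\rangle_N$, i.e. $S^0$ is itself a strong generator with generation bound at most $N$.

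Now pick any integer $k'\geq N$. The suspension spectrum of $\RP^{2^{k'}}$ lies in $\SW$ because $\RP^{2^{k'}}$ is a finite CW complex, so
\[
\RP^{2^{k'}}\in\SW\subseteq\langle S^0\rangle_N\subseteq\langle S^0\rangle_{k'}\subseteq\langle S^0\rangle^\oplus_{k'},
\]
directly contradicting (\ref{eq:RP}). Hence $\SW$ itself cannot be strongly generated, which together with the first paragraph finishes the proof.

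The substantive content has already been extracted in (\ref{eq:RP}) via the length-$k$ Steenrod square composition $\Sq^{2^{k-1}}\!\cdots\Sq^2\Sq^1$ and Christensen's ghost lemma, so I anticipate no serious obstacle in the argument above. The only points requiring care are the multiplicativity of the level filtration, which is a standard formal fact, and the transition from the ``$\oplus$'' version of levels inside $\SHC$ to the ordinary version inside $\SW$ used in the displayed chain of inclusions — but this is immediate, since arbitrary direct sums strictly relax the condition defining $\langle S^0\rangle_{k'}$.
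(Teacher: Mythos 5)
Your proof is correct and follows essentially the same route as the paper: Theorem~\ref{thm:connspc} together with monogenicity and connectedness of $\Spc\SW$ disposes of all proper non-zero thick subcategories, and (\ref{eq:RP}) rules out strong generation of $\SW$ itself after reducing to the generator $S^0$ (the paper's earlier remark that any generator of a strongly generated category is strong, which you spell out via multiplicativity of levels). The only cosmetic difference is that you use just the evident inclusion $\langle S^0\rangle_{k}\subseteq\langle S^0\rangle^\oplus_{k}$, whereas the paper cites the Bondal--Van den Bergh equality $\langle S^0\rangle_k=\langle S^0\rangle^\oplus_k\cap\SW$; as you note, only the easy containment is needed.
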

\begin{proof}
    From our comments preceding this theorem we see that $\SW$ satisfies the assumptions of Theorem \ref{thm:connspc}, and since the Balmer spectrum is connected, we obtain the result for any \emph{proper} non-zero thick tensor ideal, and thus for all proper non-zero thick subcategories since $\SW$ is monogenic.

    It remains to show that $\SW$ does not admit a strong generator.  As remarked earlier, it is sufficient to show that $S^0$ does not strongly generate.  By Proposition~2.2.4 of \cite{bvdb03} one has
    $$\langle S^0 \rangle_k = \langle S^0 \rangle^\oplus_k \cap \SW,$$
    so by (\ref{eq:RP}) there are objects for any $k$ which are not level $k$ or less with respect to $S^0$.  This shows that $\SW$ is not strongly generated by $S^0$.
\end{proof}

\bibliographystyle{amsplain}
\bibliography{stronggen}

\end{document}